\newtheorem{theorem}{Theorem}
\newtheorem{prop}[theorem]{Proposition}
\newtheorem{example}[theorem]{Example}
\newtheorem{cor}[theorem]{Corollary}
\theoremstyle{remark}
\newtheorem{remark}[theorem]{Remark}
\newcommand\Area{ \mathsf{Area} }
\newcommand\Dinv{ \mathsf{Dinv} }
\newcommand\Skips{ \mathsf{Skip} }
\newcommand\Inv{ \mathsf{Inv} }
\newcommand\area{ \mathsf{area} }
\newcommand\dinv{ \mathsf{dinv} }
\newcommand\skips{ \mathsf{skip} }
\newcommand\inv{ \mathsf{inv^m} }
\newcommand\ides{ \mathsf{iDes} }
\newcommand\des{ \mathsf{Des} }
\newcommand\leg{ \mathsf{leg} }
\newcommand\arm{ \mathsf{arm} }
\newcommand\bounce{ \mathsf{bounce} }
\newcommand\tdinv{ \mathsf{tdinv} }
\newcommand\maxtdinv{ \mathsf{maxtdinv} }
\def \Z {\mathbb Z}
\def \R {\mathbb R}
\def \Q {\mathbb Q}
\def \PF {\mathsf{PF}}
\renewcommand{\th}{^{\text{th}}}
\title{A Rational Catalan Formula for $(m,3)$-Hikita Polynomials}
\author{Ryan Kaliszewski and Debdut Karmakar}
\begin{document}
\maketitle
\begin{abstract} Building upon a recent formula for $(3,m)$-Catalan polynomials, we describe a formula for $(3,m)$-Hikita polynomials in terms related to Catalan polynomials.  This formula shows a surprising relation among coefficients of Hikita polynomials and implies deeper recursive relations and proves the $q,t$-symmetry of $(3,m)$-Hikita polynomials.

\end{abstract}

\section{Introduction}

In the early 1990's Garsia and Haiman introduced an important sum of
functions in
$\Q(q,t)$, the $q,t$-Catalan polynomial $C_n(q,t)$,
which has since been shown to have interpretations in terms of algebraic
geometry and representation theory.
These classical $q,t$-Catalan polynomials are given by
\begin{align} C_n(q,t) & =\sum_{\pi}q^{\dinv(\pi)}t^{\area(\pi)} \\  \label{pt2}  &=\sum_{\pi}q^{\area(\pi)}t^{\bounce(\pi)},\end{align}
where the sums are over all Dyck paths $\pi$ from $(0,0)$ to $(n,n)$, with \eqref{pt2} due to \cite{Haglund03}.  For an overview of the classical $q,t$-Catalan polynomials and Dyck paths, see \cite{Garsia96, Garsia02, Haglund05,Haglund08}.

Recently, a valuable generalization of the classic $q,t$-Catalan polynomial has come to light \cite{Hikita12}.  For positive integers $m,n$ that are coprime, these \emph{$(m,n)$-rational} $q,t$-Catalan polynomials have a similar description to the classic case
\begin{equation} C_{m,n}(q,t) =\sum_{\pi}q^{\dinv(\pi)}t^{\area(\pi)}, \end{equation}
where the sum is over all \emph{rational} Dyck paths $\pi$ from $(0,0)$ to $(m,n)$.

A rational Dyck path is a path in the $m\times n$ lattice which proceeds by north and east steps from $(0,0)$ to $(m,n)$ and which always remains above the main diagonal $y=\frac{m}{n}x$.  The collection of cells lying above a Dyck path $\pi$ always forms an English Ferrers diagram $\lambda(\pi).$  In 2015, Kaliszewski and Li gave an explicit formula of $(3,n)$-rational $q, t$-Catalan polynomials \cite{Kalis15,Kalis16} for $n$ not divisible by 3,
\begin{equation} \label{klformula} C_{3,n}(q,t)=\sum_{0\leq i <n/3} s_{(n-1-2i,i)}(q,t). \end{equation}
Since $C_{m,n}(q,t)=C_{n,m}(q,t)$, by reflecting the Dyck paths about the line $y=x$, this gives an explicit formula for $C_{n,3}$ as well.

In \cite{Hikita12}, Hikita extended parking functions and their statistics to the $(m, n)$-rational case and defined polynomials 
\begin{equation} \label{hikitapoly} \mathcal{H}_{m,n}(X;q,t)=\sum_{\PF}t^{\area(\PF)}q^{\dinv(\PF)}F_{\ides(\PF)}(X), \end{equation}
where the sum is over all parking functions $\PF$ over all $(m,n)$-Dyck paths and the $F$ are the (Gessel) fundamental quasisymmetric functions indexed by the inverse descent set of the reading word of $\PF$. When $m=2$ or $n=2$, the structure of Hikita's polynomials  have been completely described by \cite{Leven14}.  

Since, for each $(m,n)$-rational Dyck path there is a parking function on that path with the same statistics as the underlying Dyck path, the $(m,n)$-rational $q,t$-Catalan polynomial appears within $\mathcal H_{m,n}(X;q,t)$.  Specifically, $C_{m,n}(q,t)$ is the coefficient of $F_{\emptyset}$ in the expansion given in Equation \eqref{hikitapoly}.

In this paper we explore a relation between the coefficients of the two- and three-row Hikita polynomials $\mathcal{H}_{m,2}$ and $\mathcal{H}_{m,3}$ and the two- and three-row Catalan polynomials $C_{m,2}$ and $C_{m,3}$.  We will prove formulas:
\[ \mathcal{H}_{m,2}(X;q,t)=C_{m,2}(q,t)s_{(1,1)}(X)+C_{m-2,2}(q,t)s_{(2)}(X) \]
and
\[ \mathcal{H}_{m,3}(X;q,t)=C_{m,3}(q,t)s_{(1,1,1)}(X)+\left(K_{m-1,3}(q,t)+K_{m-2,3}(q,t)\right)s_{(2,1)}(X)+ C_{m-3,3}(q,t)s_{3}(X), \]
where 
\[ K_{m,3}(q,t)=\sum_{0\leq i<m/3} s_{(n-1-2i,i)}(q,t). \]
Note that for $m$ not divisible by 3,
 \[ K_{m,3}(q,t)=C_{m,3}(q,t), \]
by Equation \eqref{klformula}.

\subsection{Rational Dyck Paths}

Suppose that $m,n$ are positive coprime integers.  Construct the \emph{$(m,n)$-lattice} by drawing a rectangular integer lattice in $\R^2$ whose southwest corner lies on the origin and whose northeast corner lies on the point $(m,n)$.  The cell whose northeast corner lies on the point $(u,v)$ will be referred to as cell $(u,v)$.  Thus we can define the $i\th$ row as the set of cells
\[ \{(u,i)|1\leq u\leq n\} \]
and the $j\th$ column as the set of cells
\[ \{(j,v)|1\leq v\leq m\}. \]
The \emph{$(m,n)$-diagram} is the $(m,n)$-lattice where each cell $(u,v)$ contains an integer $a$ that satisfies
\begin{equation} a = mn-un-(n+1-v)m. \end{equation}
We call $a$ the \emph{rank} of cell $(u,v)$, denoted by $\gamma_{m,n}(u,v)$ or simply $\gamma(u,v)$ when the parameters are clear from context.  Since $m$ and $n$ are coprime, there are no duplicate ranks.

An \emph{$(m,n)$-rational Dyck path} or $(m,n)$-Dyck path is path on the $(m,n)$-diagram that begins at $(0,0)$ and ends at $(m,n)$.  The path can only consist of northward and eastward steps and must always lie above the diagonal $y=\frac{n}{m}x$.

An $(m,n)$-Dyck path partitions the cells within the $(m,n)$-diagram into two sets.  Since the path must lie above the diagonal, one of the sets will always contain the southeast corner of the $(m,n)$-diagram.  We say that any of the cells in this set are \emph{below the path}.  The other set of cells, which may be empty, are \emph{above the path}.  We say that a cell is \emph{on the path} if its western edge is part of the path.
We will say that a rank is above, below, or on the path if the cell containing it is above, below, or on the path; respectively.

\begin{example} A $(4,7)$-Dyck path:
\[ \pi=\dyckpath{4}{7}{0,0,0,0,0,2,2}{3cm}\;.\]
The cells that are above the path are colored gray and those that are below the path are colored cyan.  Rank 6 is above the path, while rank 5 is below the path.  The ranks that are on the path are $-7,-3,-1,1,3,5,$ and $9$.

Here $\gamma(1,5)=9$ and $\gamma(4,3)=-20$.

\end{example}

The set of cells above the path always has the shape of a Ferrers diagram (in English notation).  This is because the only allowed moves are northward steps and eastward steps, so the number of cells above the path in each row must be weakly increasing from bottom to top.

\subsection{Statistics on Rational Dyck Paths}

Let $\pi$ be an $(m,n)$-Dyck path for coprime $m$ and $n$.  We partition the cells containing positive ranks into three sets, $\Area(\pi),\Dinv(\pi)$ and $\Skips(\pi)$.  Define 
\begin{equation} \Area(\pi)=\{(x,y):(x,y)\text{ is below path } \pi \text{ and contains a positive rank}\}. \end{equation}
For the cells above the path define
\begin{equation} \label{dinv} \Dinv(\pi)=\left\{(x,y)\text{ above the path}:\frac{\arm(x,y)}{\leg(x,y)+1}<\frac{m}{n}<\frac{\arm(x,y)+1}{\leg(x,y)}\right\}, \end{equation}
where $\arm(x,y)$ is the number of cells above $\pi$ and strictly east of $(x,y)$ and $\leg(x,y)$ is the number of cells above $\pi$ and strictly south of $(x,y)$.  We interpret division by zero to be infinity.  Define the $\Skips$ set to be the remaining cells above the path:
\begin{equation} \Skips(\pi)=\{(x,y)\text{ above the path}:(x,y)\not\in\Dinv(\pi) \}\;. \end{equation}
Set
\[ \area(\pi)=|\Area(\pi)|,\qquad \dinv(\pi)=|\Dinv(\pi)|, \qquad \skips(\pi)=|\Skips(\pi)|. \]
The idea is that when counting the cells that contribute to $\dinv$ we should \emph{skip} any that do not satisfy the inequality in~\eqref{dinv}.

Presented in \cite{Kalis15,Kalis16}, the \emph{fast $\dinv$} algorithm gives a faster and simpler way to determine whether or not a cell is in $\Dinv$.  For a given cell $(x,y)$ above an $(m,n)$-Dyck path $\pi$, let $(x,y)^\downarrow$ be the southernmost cell in the same column as $(x,y)$ that is above the path and $(x,y)^\Downarrow$ be the northermost cell in the same column as $(x,y)$ that is below the path.  Similarly, let $(x,y)^\rightarrow$ be the easternmost cell in the same row as $(x,y)$ that is above the path and $(x,y)^\Rightarrow$ be the westernmost cell in the same row as $(x,y)$ that is below the path.  So $(x,y)^\Rightarrow$ is exactly one cell to the east of $(x,y)^\rightarrow$ and $(x,y)^\Downarrow$ is exactly one cell south of $(x,y)^\downarrow$.

\begin{theorem}[Fast $\dinv$] \label{fastdinvthm}
Suppose $\pi$ is an $(m,n)$-Dyck path and let $(x,y)$ be a cell above the path in $\pi$.  The cell $(x,y)$ is in $\Dinv(\pi)$ if and only if
\begin{equation} \label{fastdinv} \gamma[(x,y)^\rightarrow]>\gamma[(x,y)^\Downarrow] \qquad \text{and}\qquad \gamma[(x,y)^\downarrow]>\gamma[(x,y)^\Rightarrow]. \end{equation}
\end{theorem}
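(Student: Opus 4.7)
The plan is to prove the theorem by direct computation using the explicit rank formula. Rewriting
\[
\gamma(u,v) = mn - un - (n+1-v)m = m(v-1) - nu,
\]
we see that $\gamma$ is affine-linear in $(u,v)$: moving one step north adds $m$ to the rank and moving one step east subtracts $n$. So differences of ranks between cells can be computed immediately once the cells are identified in coordinates.

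Write $a = \arm(x,y)$ and $\ell = \leg(x,y)$. Directly from the definitions of the arrows, $(x,y)^\rightarrow = (x+a, y)$ and $(x,y)^\Rightarrow = (x+a+1, y)$, while $(x,y)^\downarrow = (x, y-\ell)$ and $(x,y)^\Downarrow = (x, y-\ell-1)$. Plugging these into the rank formula yields
\begin{align*}
\gamma[(x,y)^\rightarrow] - \gamma[(x,y)^\Downarrow] &= m(\ell+1) - na, \\
\gamma[(x,y)^\downarrow] - \gamma[(x,y)^\Rightarrow] &= n(a+1) - m\ell.
\end{align*}

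Thus the first inequality in \eqref{fastdinv} is equivalent to $\frac{a}{\ell+1} < \frac{m}{n}$, and the second is equivalent to $\frac{m}{n} < \frac{a+1}{\ell}$. These are exactly the two inequalities defining $\Dinv(\pi)$ in \eqref{dinv}, so the Fast Dinv condition holds if and only if $(x,y)\in\Dinv(\pi)$. The degenerate cases $\ell=0$ and $a=0$ cause no trouble: in each case the same linear computation goes through and the resulting integer inequality matches the convention that division by zero is $\infty$. There is no real obstacle here; the entire argument is a mechanical application of linearity of $\gamma$ combined with the descriptions of the arrows in terms of arm and leg.
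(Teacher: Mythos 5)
Your proof is correct. Note that the paper itself gives no proof of Theorem~\ref{fastdinvthm} --- it imports the result from the cited references \cite{Kalis15,Kalis16} --- so there is nothing to compare against; on its own merits, your argument is complete: the linearization $\gamma(u,v)=m(v-1)-nu$ is right, the identifications $(x,y)^\rightarrow=(x+a,y)$, $(x,y)^\Rightarrow=(x+a+1,y)$, $(x,y)^\downarrow=(x,y-\ell)$, $(x,y)^\Downarrow=(x,y-\ell-1)$ are exactly the ones the paper asserts in its setup (and are justified because the above-path cells form a Ferrers diagram, so they are contiguous in each row and column), and the two rank differences $m(\ell+1)-na$ and $n(a+1)-m\ell$ reproduce precisely the two inequalities of \eqref{dinv}, with the $\ell=0$ case matching the division-by-zero convention.
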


\begin{example} Consider cell $(1,7)$ of the $(5,7)$-Dyck path:
\[ \pi=\dyckpath[(1,7)]{5}{7}{0,0,1,1,1,1,2}{3cm}. \]
We compute and get
$ \gamma[(1,7)^\rightarrow]=16>\gamma[(1,7)^\Downarrow]=-2\;: $
\[ \pi=\dyckpath[(2,7),(1,2)]{5}{7}{0,0,1,1,1,1,2}{3cm}. \]
But $ \gamma[(1,7)^\downarrow]=3<\gamma[(1,7)^\Rightarrow]=9\;: $
\[ \pi=\dyckpath[(3,7),(1,3)]{5}{7}{0,0,1,1,1,1,2}{3cm}. \]
So $(1,7)\not\in\Dinv(\pi)$.
\end{example}

Define the rational Catalan polynomial
\[ C_{m,n}(q,t)=\sum_{\pi} q^{\dinv(\pi)}t^{\area(\pi)} \]
where the sum is over all $(m,n)$-Dyck paths.

\section{Modified Catalan Polynomials, $K_{m,3}(q,t)$ }

For $m$ a positive integer, define
\[ K_{m,3}(q,t)=\sum_{0\leq i<m/3} s_{m-1-2i,i}(q,t). \]
where $s$ is the Schur basis for symmetric functions.  When $m$ is not divisible by 3
\[ K_{m,3}(q,t)=C_{m,3}(q,t)=C_{3,m}(q,t), \]
so $K_{m,3}$ are the generating functions for rational Dyck paths.

$K_{3k,3}$ can be realized as generating functions for modified $(3k,3)$-rational Dyck paths.  When a rank appears more than once in the $(3k,k)$-diagram then the one appearing further to the east is considered slightly larger and only \emph{positive} ranks can lie above the path.

\begin{example}
The Dyck path
\[ \pi=\dyckpath{6}{3}{0,1,4}{3cm} \]
is a valid modified $(6,3)$-Dyck path because the rank in cell $(4,3)$ is slightly larger than the $0$-rank in cell $(2,2)$, and is therefore positive.
\end{example}

Rather than showing a direct proof in the style of \cite{Kalis16}, we will make the observation that there is a bijection from the set of $(3k,3)$-Dyck paths and the set of $(3k+1,3)$-Dyck paths where the rank $1$ is below the path.  One can see the bijection by superimposing the $(3k+1,3)$-diagram over the $(3k,3)$-diagram and noting that 
\[ \gamma_{3k,3}(x,y) > \gamma_{3k,3}(u,v) \qquad\iff\qquad \gamma_{3k+1,3}(x,y) > \gamma_{3k+1,3}(u,v). \]
Thus the $\dinv$ and $\area$ statistics match precisely.

\begin{example}
Consider
\[ \pi=\dyckpath{9}{3}{0,2,3}{4cm} \qquad \text{versus} \qquad \pi=\dyckpath{10}{3}{0,2,3}{4cm}\;. \]
\end{example}

The set of $(3k+1,k)$-Dyck paths where rank $1$ must lie above the path are those whose first row contains $k$ or more cells above the path.  Suppose $\pi_i$ is the $(3k+1,k)$-Dyck path with exactly $k+i$ cells in the first row above the path.  Then for $\pi_0$ and any $x\leq k$,
\[ \gamma[(x,3)^\Rightarrow]=m-2 > m-3\geq \gamma[(x,3)^\downarrow]. \]
So $\dinv(\pi_0) = k$ and $\area(\pi_0)=k$.

For $\pi_i$ with $0<i\leq k$, not only is cell $(k+i,3)$ in $\Dinv(\pi_i)$, but for any $j\leq i$,
\begin{align*} \gamma[(j,3)^\Rightarrow] & =\gamma(k+i+1,3) \\
 & =\gamma(i,3)-3k-3 \\
 & <\gamma(j,3)-3k-3 \\
 & <\gamma(j,3)-3k-1 \\
 & = \gamma[(j,3)^\downarrow]. \end{align*}
So $\dinv(\pi_i)=k +2i$ and $\area(\pi_i)=k-i$.

The $(3k+1,3)$-Dyck paths where $1$ lies above the path contribute
\[ \sum_{i=0}^k q^{k+2i}t^{k-i}=\sum_{i=0}^k q^{3k-2i}t^i \]
to $C_{k+1,3}(q,t)$.
Note that in the bijection, a $(3k+1,3)$ path with rank $1$ below the path is mapped to a modified $(3k,3)$ path.  Since the rank $1$ is transformed into rank $0$, that cell no longer contributes to the area of the image.  So
\begin{align*} \frac{C_{3k+1,3}(q,t) - \sum_{i=0}^k q^{3k-2i}t^i}{t}
 & =\frac{\sum_{i=0}^k s_{3k-2i,i}(q,t) - \sum_{i=0}^k q^{3k-2i}t^i}{t} \\
 & =\sum_{0\leq i<k} s_{3k-1-2i,i}(q,t) \\
 & = K_{3k,3}(q,t).
 \end{align*}

\section{Rational Parking Functions}

Let $m$ and $n$ be coprime. 
An $(m,n)$-parking function is an $(m,n)$-Dyck path with the integers $1,\ldots, n$ written on the path so that within any particular column the entries are increasing from bottom to top.  We can think of an $(m,n)$-parking function $P$ as a function $P:[n]\rightarrow\Z$ where $P(i)$ is the rank of the cell containing $i$.  Thus we can write the ranks on the path as
\[ R(P)=[P(1), P(2),\ldots, P(n)]\;. \]

Another way of thinking of an $(m,n)$-parking function is as a pair $(\pi,\sigma)$ where $\pi$ is an $(m,n)$-Dyck path and
$\sigma\in\mathfrak S_n$ is a permutation.  If $R(\pi)=\{r_1,\ldots,r_n\}$ are the set of ranks on the path of $\pi$ written in increasing order then $\sigma$ is the permutation where
\[ r_{\sigma^{-1}(i)} = P(i) .\]
Each pair $(\pi,\sigma)$ can be uniquely represented by using inline or window notation:
\[ [ r_{\sigma^{-1}(1)}, r_{\sigma^{-1}(2)},\ldots,r_{\sigma^{-1}(n)}]\;. \]

In this language an $(m,n)$-parking function can be realized as a pair $(\pi,\sigma)$ where $\pi$ is an $(m,n)$-Dyck path and $\sigma$ is a permutation such that if $r_{\sigma^{-1}(i)}=k$ and $r_{\sigma^{-1}(j)}=k+m$ in $R(\pi)$ then $i<j$, i.e. in the window notation $k$ must appear left of $k+m$.  For a more classical handling of $(m,n)$-parking functions, see \cite{Bergeron16}.

\begin{example}  Consider the $(4,7)$-Dyck path:
\[ \pi=\dyckpath{4}{7}{0,0,0,0,0,2,2}{3cm}\;.\]

The set of $(4,7)$-parking functions includes:
\[ [-7,-3,-1,1,3,5,9], \qquad [-7,-1,3,-3,1,5,9], \qquad\text{and}\qquad [-1,-7,-3,1,5,9,3]. \]
\end{example}

\subsection{Statistics on Parking Functions}

The construction of the Hikita polynomial from parking functions is analagous to the construction of the Catalan polynomial from Dyck paths.  The first step is to extend the Dyck path statistics to the set of parking functions.

Given an $(m,n)$-parking function $P=(\pi,\sigma)$ we define the $\area$ to be the $\area$ of the underlying Dyck path
and the inverse descent set is the inverse descent set of the permutation $\sigma$, i.e.
\[ \area(P)=\area(\pi) \qquad\text{ and }\qquad \ides(P)=\ides(\sigma).  \]
If we look at the window $W$ for $P$ we can compute the inverse descents of the parking function by computing the descents of the window,
\[ \ides(P) = \des(W). \]

Let $m$-bounded inversions be pairs:
\begin{equation} \Inv(P)=\{(i,j) | i<j \text{ and } \sigma(j)<\sigma(i)<\sigma(j)+m\} \end{equation}
and $\inv(P)=|\Inv(P)|.$
Define the $\dinv$ of a parking function to be
\begin{equation} \label{dinvdef} \dinv(P)=\dinv(\pi)-\inv(P) . \end{equation}

\begin{remark} The definition of $\dinv$ for parking functions is given in \cite{Gorsky13,Bergeron16} as
\[ \dinv(P)=\dinv(\pi)+\tdinv(P)-\maxtdinv(\pi) \]
where $\tdinv$ is the number of pairs
\[ \{(i,j) | i<j \text{ and } \sigma(i)<\sigma(j)<\sigma(i)+m\} \]
and $\maxtdinv$ is the largest $\tdinv$ of all parking functions associated to the path.
It is not difficult to see that the definition given on line \eqref{dinvdef} is equivalent.
\end{remark}

This allows us to define the Hikita polynomials
\[ \mathcal{H}_{m,n}(X;q,t)=\sum_{\PF}t^{\area(\PF)}q^{\dinv(\PF)}F_{\ides(\PF)}(X) \]
where the sum is over all $(m,n)$-parking functions and $\{F_\alpha\}$ is the set of Gessel's fundamental
quasisymmetric functions.

It is immediate by the descriptions of the statistics that if $P$ is a parking function on Dyck path $\pi$ with window
\[ \sigma=[w_1,w_2,\ldots,w_n] \]
where $w_i<w_{i+1}$ for all $i$, then
\[ \dinv(P)=\dinv(\pi). \]

\subsection{Affine Permutations and Parking Functions}

There is a correspondence between $(m,n)$-parking functions and $m$-bounded affine $n$-permutations, see \cite{Gorsky14a}.  Suppose that $P$ is a parking function.  To obtain the affine permutation from the window notation of $P$, add $\kappa=n+1-\area(P)$ to each rank:
\[ \sigma=[w_1,w_2,\ldots,w_n]\longrightarrow [w_1+\kappa,w_2+\kappa,\ldots,w_n+\kappa]=\hat\sigma. \]
These affine permutations are $m$-bounded because for all $i$,
\[ \sigma^{-1}(i)<\sigma^{-1}(i+m). \]

The statistics of the parking function can be recovered from the affine permutation.  Let $a$ be the smallest letter appearing in the window of $\hat\sigma$; then
\[ \area(P)=1-a. \]
An $m$-bounded inversion is a pair $(i,j)$ such that
\[ i<j, \quad 1\leq j \leq n, \quad \text{and} \quad \hat\sigma(j) <\hat\sigma(i) < \hat\sigma(j)+m. \]
Let the number of $m$-bounded inversions of $\hat\sigma$ be denoted $\inv(\hat\sigma)$, which gives
\[ \dinv(P)=\frac{(m-1)(n-1)}{2}-\inv(\hat\sigma). \]

\subsection{The $F_{[n-1]}$-term}

Let $m$ and $n$ be coprime.  The highest order term $F_{[n-1]}$ of $\mathcal H_{m,n}$ occurs when the parking function consists of a Dyck path $\pi$ and the longest permutation $w_0\in\mathfrak S_n$.  This can only happen when for each rank $k\in R(\pi)$, $k+m$ does not appear in $R(\pi)$.  Another way of saying this is that each rank on the path $\pi$ must lie in a distinct column of the $(m,n)$-diagram.

Since this can only happen when $m>n$, we have the following proposition:
\begin{prop} The coefficient of $F_{[n-1]}$ is 0 in all $\mathcal H_{m,n}$ where $m<n$. \end{prop}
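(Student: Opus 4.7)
The plan is to show that under the hypothesis $m<n$ no $(m,n)$-parking function can use the longest permutation $w_0\in\Sn$, which by the preceding paragraph is exactly what is required for a nonzero $F_{[n-1]}$ coefficient. The preceding discussion already reduces the problem to a combinatorial statement about Dyck paths: $F_{[n-1]}$ can appear only from a parking function $(\pi,w_0)$, and this in turn requires that no two ranks in $R(\pi)$ differ by $m$, i.e.\ that the $n$ on-path cells of $\pi$ all lie in distinct columns of the $(m,n)$-diagram.

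My first step would be a simple pigeonhole count. Every $(m,n)$-Dyck path has exactly $n$ north steps, and each north step puts exactly one cell on the path (the cell immediately east of that step), yielding exactly $n$ on-path cells, one per row. The diagram only has $m$ columns, so if $m<n$ then at least two of these on-path cells must share a column.

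The one small observation I would then need is that two on-path cells sharing a column force two vertically adjacent on-path cells in that column. This follows from monotonicity: since $\pi$ uses only north and east steps, once it leaves a column by an east step it never returns, so the on-path cells in any single column form a vertically contiguous block. Thus if $(u,v_1)$ and $(u,v_2)$ with $v_1<v_2$ are both on the path, then $(u,v_1+1)$ is too, and by the rank formula $\gamma(u,v)=mn-un-(n+1-v)m$ we have $\gamma(u,v_1+1)-\gamma(u,v_1)=m$. Setting $k=\gamma(u,v_1)$, both $k$ and $k+m$ lie in $R(\pi)$, contradicting the distinct-column condition and hence ruling out $w_0$.

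Honestly there is very little obstacle here; the argument is pure pigeonhole. The only point requiring care is that pigeonhole alone produces two on-path cells in the same column, whose ranks differ by some positive multiple of $m$, whereas we need the stronger pair $(k,k+m)$. Monotonicity of the path supplies exactly this strengthening, and once that is noted the conclusion is immediate: no parking function on any $(m,n)$-Dyck path with $m<n$ can pair with $w_0$, so the coefficient of $F_{[n-1]}$ in $\mathcal{H}_{m,n}$ vanishes.
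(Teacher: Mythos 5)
Your proposal is correct and follows essentially the same route as the paper: the paper reduces the $F_{[n-1]}$ coefficient to Dyck paths whose $n$ on-path ranks occupy distinct columns and then simply asserts this forces $m>n$, while you supply the pigeonhole count and the observation that two on-path cells in one column yield adjacent cells with ranks $k$ and $k+m$. The extra detail you provide (contiguity of on-path cells in a column and the computation $\gamma(u,v+1)-\gamma(u,v)=m$) is exactly the justification the paper leaves implicit.
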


For $m>n$ let $\pi$ be a $(m,n)$-Dyck path with ranks $R(\pi)$ in distinct columns.  Note that the ranks appearing in
row $i$ of the diagram are $(i-1)m-kn$ for $k=1,\ldots, m$.  However, since each rank on the path is in a distinct column, it must be that
\[ R(\pi)_i \geq (i-1)m-in. \]

If we observe the $(m-n,n)$-rank diagram, we see that its $i\th$ row contains the ranks $(i-1)(m-n)-kn=(i-1)m-(i-k-1)n$ for $k=1,\ldots,m-n$.  Thus each rank in row $i$ must occur in the $i\th$ row of the underlying diagram for $\pi$.  Furthermore, each positive rank $\pi$ must appear in the $(m-n,n)$-rank diagram.  This creates a natural bijection between $(m-n,n)$-Dyck paths and $(m,n)$-Dyck paths whose ranks on the path lie in distinct columns.

\begin{theorem} \label{mntheorem} For any $m,n$ coprime with $m>n$, the coefficient of $F_{[n-1]}$ in $\mathcal H_{m,n}$ is $C_{m-n,n}$.
\end{theorem}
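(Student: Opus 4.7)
The plan is to reduce the theorem to the bijection described immediately above the theorem statement between $(m-n,n)$-Dyck paths and $(m,n)$-Dyck paths whose on-path ranks lie in distinct columns, and then verify that both parking function statistics transport correctly.

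First, I would identify the contributing parking functions. Since $\ides(P)=\des(W)$ for the window $W$ and on-path ranks are distinct (because $\gcd(m,n)=1$), the descent set $[n-1]$ forces $W$ to be strictly decreasing, i.e., $\sigma=w_0$. The column-increasing labeling constraint then forces the on-path ranks of $\pi$ to lie in distinct columns: smaller on-path ranks in a shared column would be required to carry smaller labels, contradicting $w_0$. Under this hypothesis, the bijection of the preceding paragraph identifies $\pi$ with an $(m-n,n)$-Dyck path $\pi'$ via the rank-preserving embedding of the $(m-n,n)$-diagram as the subregion $\{(u,v):v\leq u\leq v+m-n-1\}$ of the $(m,n)$-diagram. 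The area equality $\area(\pi)=\area(\pi')$ then follows once one checks that every positive below-path cell of $\pi$ lies in the embedded subregion: with on-path columns $c_1<c_2<\cdots<c_n$ satisfying $c_v\geq v$ and with the inequality $v+m-n>(v-1)m/n$ holding for $v\leq n$, positive cells outside the embedded subregion in row $v$ must have column index strictly less than $v$, hence strictly less than $c_v$, so they lie above the path and contribute nothing to $\area(\pi)$.

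The main obstacle is the $\dinv$ identity. Using the alternative formula $\dinv(P)=\dinv(\pi)+\tdinv(P)-\maxtdinv(\pi)$ from the remark and noting that $\tdinv$ vanishes for $\sigma=w_0$ (since $i<j$ implies $P(i)>P(j)$), we reduce to
\[ \dinv(\pi)-\maxtdinv(\pi)=\dinv(\pi'). \]
I would partition the above-path cells of $\pi$ according to whether they lie inside or outside the embedded subregion. For cells inside, the rank preservation of the embedding, together with the fact that $^\downarrow$, $^\Downarrow$, $^\rightarrow$, $^\Rightarrow$ in $\pi$ restrict to the corresponding quantities in $\pi'$, keeps the fast-$\dinv$ criterion (Theorem~\ref{fastdinvthm}) invariant and produces a bijection onto $\Dinv(\pi')$. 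For cells outside the embedded subregion, I would assign to each $(x,y)\in\Dinv(\pi)$ the pair of on-path ranks $(\gamma[(x,y)^\Downarrow],\gamma[(x,y)^\Rightarrow])$; the fast-$\dinv$ inequalities place this pair within distance $m$ and hence in the set counted by $\maxtdinv(\pi)$. The principal technical step, and where I expect the main difficulty, is verifying this assignment is a bijection onto close rank pairs: surjectivity requires tracking how the distinct-column arrangement of on-path cells uniquely produces each close rank pair from an outside $\Dinv$-cell, which I expect to reduce to a careful case analysis based on the relative row and column positions of the two ranks.
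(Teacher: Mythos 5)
Your setup is fine: the contributing parking functions are exactly the decreasing windows on paths whose on-path ranks occupy distinct columns, your area argument is correct, and the reduction of the $\dinv$ claim to the identity $\dinv(\pi)-\maxtdinv(\pi)=\dinv(\pi')$ is valid. The gap is in your proposed proof of that identity. The embedding $(k,i)\mapsto(i+k-1,i)$ of the $(m-n,n)$-diagram preserves rows and ranks but \emph{shears} columns, so the operations $^\downarrow$ and $^\Downarrow$ (and hence $\leg$) are \emph{not} preserved: the cell directly south of an embedded cell in the small diagram corresponds to a cell one column to the west of "directly south" in the big diagram. Consequently neither of your two bijections exists. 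Concretely, take $(m,n)=(7,3)$ and the path $\pi$ with on-path cells in columns $(1,3,4)$ of rows $1,2,3$ (on-path ranks $-3,-2,2$). Here $\dinv(\pi)=4$, $\maxtdinv(\pi)=3$ (all three rank pairs are within $7$), and the image $\pi'$ is the $(4,3)$-path with on-path columns $(1,2,2)$, which has $\dinv(\pi')=1$; the identity $4-3=1$ holds. But the subregion cell $(3,3)$ lies in $\Dinv(\pi)$ (its southern neighbor $(3,2)$ is on the path, so $\leg=0$) while its image $(1,3)$ in $\pi'$ is \emph{not} in $\Dinv(\pi')$ (its southern neighbor $(1,2)$ is above the path, so $\leg=1$ and $0/2<4/3<1/1$ fails); and the outside cell $(2,3)$ is \emph{not} in $\Dinv(\pi)$ even though the on-path ranks of rows $2$ and $3$ form a close pair, so your inside count is $2\neq 1=\dinv(\pi')$ and your outside count is $2\neq 3=\maxtdinv(\pi)$. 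The two discrepancies cancel in the total, which is exactly why a termwise bijection cannot close the argument; you would need to account for the transfer of cells between the two classes, and no mechanism for that is proposed.

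For comparison, the paper sidesteps the cell geometry entirely: it passes to the $m$-bounded affine permutation $\hat\sigma=(\eta,\omega_0)$ attached to the decreasing window, shows that $(\eta,id)$ is $(m-n)$-bounded (hence encodes an $(m-n,n)$-parking function with increasing window, whose $\dinv$ is that of the underlying path), and proves
\[ \inv(\eta,\omega_0)-\mathsf{inv^{m-n}}(\eta,id)=\binom{n}{2}, \]
which matches the shift $\frac{(m-1)(n-1)}{2}-\frac{(m-n-1)(n-1)}{2}=\binom{n}{2}$ in the affine $\dinv$ formula. If you want to keep your geometric framing, the honest route is to prove the single aggregate identity $\dinv(\pi)-\dinv(\pi')=\maxtdinv(\pi)$ directly (for instance by an exchange argument pairing the failures of your two maps), rather than asserting the two separate bijections.
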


\begin{proof}  Suppose that $\pi$ is an $(m,n)$-Dyck path with $m>n$ coprime and each rank on the path in a distinct column.  The term in the $F_{[n-1]}$-coefficient in $\mathcal H_{m,n}$ arising from $\pi$ corresponds to the parking function
\[ \sigma = \{w_1,w_2,\ldots,w_n\} \]
where $w_i>w_{i+1}$ for all $i$.

This parking function corresponds to an affine permutation, which can be factored into an affine part and a standard permutation $\hat\sigma=(\eta,\omega_0)$.  We claim that $(\eta, id)$ is $(m-n)$-bounded and 
\begin{equation} \label{hardequal} \frac{(m-n-1)(n-1)}{2}-\mathsf{inv^{m-n}}(\eta,id)=\frac{(m-1)(n-1)}{2}-\inv(\eta,\omega_0). \end{equation}

To show that $(\eta,id)$ is $(m-n)$-bounded first note that the window of $(\eta,\omega_0)$ is decreasing.  So if $i$ appears in the window of $(\eta,\omega_0)$ then $i+m$ must appear right of the window.  Therefore $i+m-n$ must either appear in the window or right of the window.  But the window of $(\eta,id)$ is increasing, so $i+m-n$ must appear to the right of $i$.  Since this holds for all $i$ in the window, it holds for all $i$.

To prove Equation \eqref{hardequal} we will show that 
\[ \inv(\eta,\omega_0)-\mathsf{inv^{m-n}}(\eta,id) = \binom{n}{2}. \]
We do this by noting that for every pair $(i,j)$ in the window of $(\eta,id)$ with $i<j$, if $j>i+m$, there is a $k_j$ to the left of the window with $i+m-n<k_j<i+m$.
\end{proof}

Since for degree 2, $F_\emptyset = s_{(1,1)}$ and $F_{\{1\}}=s_{(2)}$ this gives:
\begin{cor}
For any odd $m>0$, 
\[ \mathcal H_{m,2}(X;q,t) = C_{m,2}(q,t)s_{(1,1)}(X) + C_{m-2,2}(q,t)s_{(2)}(X). \]
\end{cor}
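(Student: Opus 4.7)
The plan is to reduce the claim to two results already established in the paper: the identification of the $F_\emptyset$-coefficient as the Catalan polynomial and Theorem \ref{mntheorem} giving the $F_{[n-1]}$-coefficient. First I would observe that for $n=2$ the symmetric group $\mathfrak S_2$ contains only the identity and the longest permutation $w_0=[2,1]$, so the only inverse descent sets that can arise from a $(m,2)$-parking function are $\emptyset$ and $\{1\}=[n-1]$. Consequently the expansion
\[ \mathcal H_{m,2}(X;q,t)=A(q,t)F_\emptyset(X)+B(q,t)F_{\{1\}}(X) \]
has only two terms.

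Next I would identify the two coefficients. The coefficient $A(q,t)$ of $F_\emptyset$ equals $C_{m,2}(q,t)$: this was recorded in the introduction, since for every $(m,2)$-Dyck path $\pi$ there is a unique parking function with increasing window (hence inverse descent set $\emptyset$) whose $\area$ and $\dinv$ statistics match those of $\pi$. The coefficient $B(q,t)$ of $F_{\{1\}}=F_{[n-1]}$ is handled directly by Theorem \ref{mntheorem}: since $m$ is odd and $m>0$, we have $m>2=n$ (the case $m=1$ being vacuous as $C_{-1,2}=0$), so $B(q,t)=C_{m-2,2}(q,t)$.

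Finally I would convert the fundamental quasisymmetric functions to Schur functions in the two-variable-degree sense. In degree $n=2$ the only compositions are $(1,1)$ and $(2)$, and the standard identifications give $F_\emptyset=s_{(1,1)}$ and $F_{\{1\}}=s_{(2)}$. Substituting these identifications yields
\[ \mathcal H_{m,2}(X;q,t)=C_{m,2}(q,t)\,s_{(1,1)}(X)+C_{m-2,2}(q,t)\,s_{(2)}(X), \]
which is the desired formula.

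There is essentially no obstacle: the corollary is a direct packaging of Theorem \ref{mntheorem} together with the trivial classification of inverse descent sets in $\mathfrak S_2$. The only minor bookkeeping is to check the boundary case $m=1$, where one verifies that the formula reduces correctly (there $C_{m-2,2}=C_{-1,2}=0$ and $C_{1,2}=1$, matching the unique trivial parking function).
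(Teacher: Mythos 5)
Your proposal is correct and follows essentially the same route as the paper: the corollary is obtained by combining the identification of the $F_\emptyset$-coefficient with $C_{m,2}$, Theorem \ref{mntheorem} for the $F_{\{1\}}$-coefficient, and the degree-two identities $F_\emptyset=s_{(1,1)}$, $F_{\{1\}}=s_{(2)}$. Your extra remark on the boundary case $m=1$ is a harmless refinement the paper leaves implicit.
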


\section{The Hikita Polynomials $\mathcal H_{m,3}$}

Theorem \ref{mntheorem} states that
\[ \mathcal H_{m,3}(q,t)= C_{m,3}(q,t)F_\emptyset(X)+p^m_1(q,t)F_{\{1\}}(X)+ p^m_2(q,t)F_{\{2\}}(X)+C_{m-3,3}(q,t)F_{\{1,2\}}(X). \]
We will continue by investigating the polynomials $p^m_1(q,t)$ and $p^m_2(q,t)$.  

Consider the set of $(m,3)$-Dyck paths for $m$ not divisible by 3.  We can uniquely describe an $(m,3)$-Dyck path by stating the number of cells above the path in rows 1 and 2, respectively.  We will denote this by $D_m(k,\ell)$, where $0\leq k <2m/3$, $0\leq \ell<m/3$ and $\ell\leq k$.

\begin{example}
\[ D_5(3,1)=\dyckpath{5}{3}{0,1,3}{4cm}\;. \]
\end{example}

For a fixed $m$ we will partition the set of $(m,3)$-Dyck paths into various types, based on $k$ and $\ell$:
\begin{itemize}
\item \underline{Type 0:}  $\{D_m(0,0)\}$,
\item \underline{Type 1:}  $\{D_m(k,k): k<m/3\}$,
\item \underline{Type 2a:}  $\{D_m(k,0):k<m/3\}$,
\item \underline{Type 2b:}  $\{D_m(k,0):k> m/3\}$,
\item \underline{Type 3a:}  $\{D_m(k,\ell): 0<\ell<k, k<m/3\},$
\item \underline{Type 3b:}  $\{D_m(k,\ell): 0<\ell, k> m/3\}.$
\end{itemize}
Define the polynomials 
\[ P^m_y(q,t) = \sum_{ \pi \text{ type $y$}} t^{\area(\pi)}q^{\dinv(\pi)}, \]
so that
\[ C_{m,3}(q,t) = \sum_{y\in\{0,1,2a,2b,3a,3b\}} P^m_y(q,t). \]
Since there is only one Dyck path of type 0,
\[ P^m_0(q,t)=t^{m-1}.  \]

\subsection{The polynomial $P^m_1$}

Let $m$ not be divisible by 3.
Let $\pi$ be a $(m,3)$-Dyck path of type $1$ and let $(x,3)$ be any cell above the path in the top row.
Consider that 
\begin{align*} \gamma[(x,3)^\Rightarrow] & = 2m-3-3k \\
 & > m - 3x \\ 
 & = \gamma(x,2) = \gamma[(x,3)^\downarrow], \end{align*}
since $x<k<m/3$.  No cells in the first row are in $\Dinv(\pi)$, and therefore
\[ P^m_1(q,t) = \sum_{1\leq k < m/3} t^{m-1-2k}q^k. \]

There are three parking functions associated to any $(m,3)$-Dyck path of type $1$:
\[ p_1=[-3,a,a+m], \qquad p_2=[a,-3,a+m], \qquad p_3=[a,a+m,-3]. \]
If $p_i$ are the parking functions associated to the Dyck path $D_m(k,k)$ then
$a=m-3-3k<m-3$.  Note that $\des(p_1)=\emptyset, \des(p_2)=\{1\}$ and $\des(p_3)=\{2\}$.

So $(1,2)$ is an $m$-bounded inversion in $p_2$ and $(1,3)$ is an $m$-bounded inversion in $p_3$.  Therefore 
\begin{equation} \label{pfdinv1} \dinv(p_2)=\dinv(p_3)=\dinv(D_m(k,k))-1. \end{equation}

\begin{example}
The type 1 $(5,3)$-Dyck path is:
\[ D_5(1,1)=\dyckpath{5}{3}{0,1,1}{4cm}\;,\]
so
\[ P^5_1(q,t)=q^1t^2. \]
There are three associated parking functions:
\[ [-3,-1,4], \qquad [-1,-3,4], \qquad [-1,4,-3] \]
which contribute terms
\[ q^1t^2F_\emptyset, \qquad t^2F_{\{1\}}, \qquad t^2F_{\{2\}},\]
respectively.
\end{example}

\subsection{The polynomial $P^m_{2a,S}$}

Let $m$ not be divisible by 3.
Let $\pi$ be a $(m,3)$-Dyck path of type $2a$ and let $(x,3)$ be any cell above the path in the top row.
Consider that 
\begin{align*} \gamma[(x,3)^\rightarrow] & = 2m-3k \\
 & > m - 3x \\ 
 & = \gamma(x,2)=\gamma[(x,3)^\Downarrow], \end{align*}
since $x<k<m/3$.  Every cell above the path is in $\Dinv(\pi)$, and therefore
\[ P^m_{2a}(q,t) = \sum_{1\leq k< m/3} t^{m-1-k}q^k. \]

There are three parking functions associated to any $(m,3)$-Dyck path of type $2a$:
\[ p_1=[-3,m-3,b], \qquad p_2=[-3,b,m-3], \qquad p_3=[b,-3,m-3]. \]
If $p_i$ are the parking functions associated to the Dyck path $D_m(k,0)$ then
$m-3 < b=2m-3-3k<2m-3$, since $k<m/3$.  Note that $\des(p_1)=\emptyset, \des(p_2)=\{2\}$ and $\des(p_3)=\{1\}$.

So $(2,3)$ is an $m$-bounded inversion
in $p_2$ and $(1,3)$ is an $m$-bounded inversion in $p_3$.  Therefore
\begin{equation} \dinv(p_2)=\dinv(p_3)=\dinv(D_m(k,0))-1. \end{equation}

\begin{example}
The type 2a $(5,3)$-Dyck path is:
\[ D_5(1,0)=\dyckpath{5}{3}{0,0,1}{4cm}\;,\]
so
\[ P^5_{2a}(q,t)=q^1t^3. \]
There are three associated parking functions:
\[ [-3,2,4], \qquad [-3,4,2], \qquad [4,-3,2] \]
which contribute terms
\[ q^1t^3F_\emptyset, \qquad t^3F_{\{2\}}, \qquad t^3F_{\{1\}},\]
respectively.
\end{example}

\subsection{The polynomial $P^m_{2b,S}$}

Let $m$ not be divisible by 3 and let $S=\{1\}$ or $S=\{2\}$.
Let $\pi$ be a $(m,3)$-Dyck path of type $2b$ and let $(x,3)$ for $x<k-m/3$ be a cell above the path in the top row.
Consider that since $k>m/3$, 
\begin{align*} \gamma[(x,3)^\rightarrow] & = 2m-3k \\
 & < m - 3x \\ 
 & = \gamma(x,2)=\gamma[(x,3)^\Downarrow]. \end{align*}
Every cell above the path with $x<k-m/3$ is not in $\Dinv(\pi)$.  However, every cell $(x,3)$ above the path with $x>k-m/3$ is in $\Dinv(\pi)$.  This can be seen because $m-3x <2m-3k$, reversing the inequality, above.  Therefore,
\[ P^m_{2b}(q,t) = \sum_{m/3<k<2m/3} t^{m-1-k}q^{\lceil m/3 \rceil}. \]

There are three parking functions associated to any $(m,3)$-Dyck path of type $2b$:
\[ p_1=[-3,m-3,b], \qquad p_2=[-3,b,m-3], \qquad p_3=[b,-3,m-3]. \]
If $p_i$ are the parking functions associated to the Dyck path $D_m(k,0)$ then
$-3 < b=2m-3-3k<m-3$, since $k>m/3$.  Note that $\des(p_1)=\{2\}, \des(p_2)=\emptyset$ and $\des(p_3)=\{1\}$.

So $(2,3)$ is an $m$-bounded inversion
in $p_1$ and $(1,2)$ is an $m$-bounded inversion in $p_3$.  Therefore
\begin{equation} \dinv(p_1)=\dinv(p_3)=\dinv(D_m(k,0))-1. \end{equation}

\begin{example}
The type 2b $(5,3)$-Dyck paths are:
\[ 
  D_5(2,0)=\dyckpath{5}{3}{0,0,2}{4cm} 
    \qquad\text{and}\qquad 
  D_5(3,0)=\dyckpath{5}{3}{0,0,3}{4cm}\;,
\]
so
\[ P^5_{2b}(q,t)=q^2t^2+q^2t^1. \]
$D_5(2,0)$ has three associated parking functions:
\[ [-3,2,1], \qquad [-3,1,2], \qquad [1,-3,2] \]
which contribute terms
\[ q^1t^1F_{\{2\}}, \qquad q^2t^1F_\emptyset, \qquad q^1t^1F_{\{1\}},\]
respectively.
\end{example}

\subsection{The polynomial $P^m_{3a,S}$}

Let $m$ not be divisible by 3.
There are six parking functions associated to any $(m,3)$-Dyck path of type $3a$:
\[ p_1=[-3,a,b], \qquad p_2=[-3,b,a], \qquad p_3=[a,-3,b], \]
\[ p_4=[a,b,-3], \qquad p_5=[b,-3,a], \qquad p_6=[b,a,-3], \]
with $a<b$.

Let $\pi$ be a $(m,3)$-Dyck path of type $3a$ and let $(x,3)$ be a cell above the path in the
top row such that $(x,2)$ is also above the path.  Since $k<m/3$,
\begin{align*} \gamma[(x,3)^\Rightarrow] & = 2m-3-3k \\
 & > m-3x \\
 & = \gamma(x,2)=\gamma[(x,3)^\downarrow].
\end{align*}
So every cell in the second row that is above the path is immediately below a cell that is not
in $\Dinv(\pi)$.  But for any cell $(x,3)$ that does not have a cell immediately below it that
is above the path,
\begin{align*} \gamma[(x,3)^\rightarrow] & = 2m-3k \\
 & > m-3x \\
 & = \gamma(x,2)=\gamma[(x,3)^\Downarrow],
\end{align*}
so it is in $\Dinv(\pi)$.
Therefore,
\[ P^m_{3a}(q,t) = \sum_{2\leq k< m/3} \sum_{\ell=1}^{k-1} t^{m-1-k-\ell}q^k. \]

If $p_i$ are the parking functions associated to the Dyck path $D_m(k,\ell)$ then
$a = m-3\ell$ and $b=2m-3k$ with $m-3<b<a+m$ since $k<m/3$.  Note that
\[ \des(p_1)=\emptyset, \qquad \des(p_2)=\des(p_4)=\{2\}, \]
\[ \des(p_3)=\des(p_5)=\{1\},\qquad \des(p_6)=\{1,2\}. \]

So $(2,3)$ is an $m$-bounded inversion in $p_2$, $(1,2)$ is an $m$-bounded inversion
in $p_3$, and $(1,3)$ is an $m$-bounded inversion in $p_4$, and $p_5$.  Therefore
\begin{equation} \dinv(p_2)=\dinv(p_5)=\dinv(p_3)=\dinv(p_4)=\dinv(D_m(k,\ell))-1. \end{equation}

\begin{example}
The $(3,5)$-diagram does not support any Dyck paths of type $3a$.  So
\[ P^5_{3a}(q,t)=0. \]
\end{example}

\subsection{The polynomial $P^m_{3b,S}$}

Let $m$ not be divisible by 3.
Let $\pi$ be a $(m,3)$-Dyck path of type $3b$.  We will break these Dyck paths into two further subcases, those with $\ell<k-m/3$ and those with $\ell>k-m/3$.  In the first case, when $\ell<k-m/3$, if $\ell<x<k-m/3$ then the cell
$(x,3)$ is not in $\Dinv(\pi)$.   This is because
\begin{align*} \gamma[(x,3)^\Downarrow] & = m - 3x \\
 & > 2m - 3k \\
 & = \gamma[(x,3) ^ \rightarrow]. \end{align*}
The cells $(x,3)$ where $x\leq \ell$ are all in $\Dinv(\pi)$ because
\begin{align*} \gamma[(x,3)^\downarrow] & = m - 3x \\
 & > 2m -3 - 3k \\
 & = \gamma[(x,3) ^ \Rightarrow]. \end{align*}
Therefore, these $(m,3)$-Dyck paths contribute
\begin{equation} \label{part3b1} \sum_{m/3+1<k<2m/3} \left(\sum_{1\leq\ell < k-m/3} q^{\lceil m/3 \rceil + 2\ell}t^{m-1-k-\ell}\right)\;. \end{equation}

In the second case, where $\ell >k-m/3$, then every cell $(x,3)$ that is immediately above a cell in the second row that is above the path and $x>k+1-m/3$ is not in $\Dinv(\pi)$.  This is because
\begin{align*} \gamma[(x,3)^\downarrow] & = m - 3x \\
 & < 2m - 3 - 3k \\
 & = \gamma[(x,3) ^ \Rightarrow]. \end{align*}
However, for every $(x,3)$ with $x<k+1-m/3$,
\begin{align*} \gamma[(x,3)^\downarrow] & = m - 3x \\
 & > 2m - 3 - 3k \\
 & = \gamma[(x,3) ^ \Rightarrow], \end{align*}
so the cell is in $\Dinv(\pi)$.  Therefore, these $(m,3)$-Dyck paths contribute 
\begin{equation} \label{part3b2} \sum_{ m/3<k< 2m/3} \left(\sum_{ k-m/3<m/3} q^{2k-\lfloor m/3 \rfloor}t^{m-1-k-\ell}\right)\;. \end{equation}

By summing Lines \eqref{part3b1} and \eqref{part3b2},
\begin{multline*} P^m_{3b}(q,t) =
\sum_{m/3+1<k<2m/3} \left(\sum_{1\leq\ell< k-m/3} q^{\lceil m/3 \rceil + 2\ell}t^{m-1-k-\ell}\right)
\\+
\sum_{m/3<k<2m/3} \left(\sum_{k-m/3<\ell<m/3} q^{2k-\lfloor m/3 \rfloor}t^{m-1-k-\ell}\right)
\;.\end{multline*}

The type 3b $(m,3)$-Dyck paths also have six parking
functions associated to each of them:
\[ p_1=[-3,a,b], \qquad p_2=[-3,b,a], \qquad p_3=[a,-3,b], \]
\[ p_4=[a,b,-3], \qquad p_5=[b,-3,a], \qquad p_6=[b,a,-3], \]
where $a$ is the rank appearing in row $2$ and $b$ is the rank appearing in row $1$.

If $p_i$ are the parking functions associated to the Dyck path $D_m(k,\ell)$ then
$a = m-3\ell$ and $b=2m-3k$ with $b<2m-3$ and $a,b > m-3.$  If $a<b$ then the case proceeds similarly to type $3a$:
\[ \des(p_1)=\emptyset, \qquad \des(p_2)=\des(p_4)=\{2\}, \]
\[ \des(p_3)=\des(p_5)=\{1\},\qquad \des(p_6)=\{1,2\}. \]

Also, $(2,3)$ is an $m$-bounded inversion in $p_2$ and $(1,2)$ is an $m$-bounded inversion in $p_3$.  However, $(1,3)$ and $(2,3)$ are $m$-bounded inversions in $p_4$ and $(1,2)$ and $(1,3)$ are $m$-bounded inversions in $p_5$.   Therefore
\begin{align} \label{type3b1a} \dinv(p_2)=\dinv(p_3)&=\dinv(D_m(k,\ell))-1,\\
\label{type3b1b}\dinv(p_4)=\dinv(p_5)&=\dinv(D_m(k,\ell))-2. \end{align}

If $b<a$ then 
\[ \des(p_1)=\des(p_6)=\{2\}, \qquad \des(p_2)=\emptyset,  \]
\[ \des(p_4)=\{1,2\}, \qquad \des(p_3)=\des(p_5)=\{1\}. \]

In this case, $(2,3)$ is an $m$-bounded inversion of $p_1$ and $(1,2)$ is an $m$-bounded inversion of $p_5$.  Both $(1,2)$ and $(1,3)$ are $m$-bounded inversions of $p_3$ and $(1,3)$ and $(2,3)$ are $m$-bounded inversions of $p_6$.  Therefore
\begin{align} \dinv(p_1)=\dinv(p_5)&=\dinv(D_m(k,\ell))-1,\\
\label{pfdinv3b22}\dinv(p_3)=\dinv(p_6)&=\dinv(D_m(k,\ell))-2. \end{align}

\begin{example}
The type 3b $(5,3)$-Dyck paths are:
\[ 
  D_5(2,1)=\dyckpath{5}{3}{0,1,2}{4cm}
    \quad\text{and}\quad 
  D_5(3,1)=\dyckpath{5}{3}{0,1,3}{4cm}\;,
\]
so
\[ P^5_{3b}(q,t)=q^3t^1+q^4. \]
$D_5(2,1)$ has six associated parking functions:
\[ [-3,-1,1], \qquad [-3,1,-1], \qquad [-1,-3,1], 
\qquad [-1,1,-3], \qquad [1,-3,-1], \qquad [1,-1,-3] \]
which contribute terms
\[ q^3t^1F_\emptyset, \qquad q^2t^1F_{\{2\}}, \qquad q^2t^1F_{\{1\}},
   \qquad q^1t^1F_{\{2\}}, \qquad q^1t^1F_{\{1\}}, \qquad t^1F_{\{1,2\}}, \]
respectively.
\end{example}

\subsection{The polynomials $K_{m-1,3}$ and $K_{m-2,3}$}

Let $m$ not be divisible by 3.  

Lines \eqref{pfdinv1}--\eqref{pfdinv3b22} give that the coefficient of $F_{\{1\}}$ in $\mathcal H_{m,3}$ is 
\[ p^m_1=q^{-1}\cdot(P^m_1+P^m_{2a}+P^m_{2b}+2P^m_{3a})+(q^{-1}+q^{-2})\cdot P^m_{3b}. \]
We claim that 
\[ K_{m-1,3}=q^{-1}\cdot(P^m_{2a}+P^m_{2b}+P^m_{3a}+P^m_{3b})\]
and
\[ K_{m-2,3}=q^{-1}\cdot(P^m_{1}+P^m_{3a})+q^{-2}\cdot P^m_{3b}. \]

Consider $q^{-1}\cdot s_{(a,b)}(q,t)$ for $a\geq b>0$,
\begin{align*} q^{-1}\cdot s_{(a,b)}(q,t) & = q^{-1} \cdot \sum_{i=b}^a q^{a+b-i}t^i \\
 & = \sum_{i=b}^a q^{(a-1)+b-i}t^i \\
 & = q^{b-1}t^a + \sum_{i=b}^{a-1} q^{(a-1)+b-i}t^i \\
 & = q^{b-1}t^a + s_{(a-1,b)}(q,t) \end{align*}
where $s_{(b-1,b)}(q,t)=0$.  If we sum over each type, $S$,
\[ \sum_S P^m_S = C_{m,3}, \]
so
\[ p^m_1 = q^{-1}\cdot (C_{m,3}-P^m_0)+q^{-1}P^m_{3a}+q^{-2}P^m_{3b}. \]
Consider that
\begin{align*} q^{-1}\cdot (C_{m,3}-t^{m-1}) & = \frac{\sum_{0\leq i<m/3} s_{(m-1-2i,i)}(q,t)-t^{m-1}}{q} \\
 & = \frac{s_{(m-1)}(q,t)-t^{m-1}+\sum_{1\leq i<m/3} s_{(m-1-2i,i)}(q,t)}{q}  \\
 & = s_{(m-2)}(q,t)+\sum_{1\leq i<m/3} \left(s_{(m-2-2i,i)}(q,t)+q^{i-1}t^{m-1-2i}\right) \\
 & = \sum_{0\leq i<m/3} s_{(m-2-2i,i)}(q,t)+ \sum_{1\leq i<m/3}q^{i-1}t^{m-1-2i} \\
 & = K_{m-1,3}(q,t) + q^{-1}\cdot P_1^m. \end{align*}
This proves that
\[ p_1^m = K_{m-1,3} + q^{-1}(P^m_1+P^m_{3a}) + q^{-2}P^m_{3b}, \]
or
\begin{equation} \label{result1} K_{m-1,3}=q^{-1}\cdot(P^m_{2a}+P^m_{2b}+P^m_{3a}+P^m_{3b}).\end{equation}

To address the second part, we will map a $(m,3)$-Dyck path $D_m(k,\ell)$ with $\ell>0$ to the $(m-2,3)$-Dyck path $D_{m-2}(k-1,\ell-1)$.  Each cell $(x,y)$ in $D_{m-2}(k-1,\ell-1)$ has the same $\arm$ and $\leg$ as $(x+1,y)$ in $D_m(k,\ell)$ so 
\[ (x,y)\in\Dinv(D_{m-2}(k-1,\ell-1)) \qquad\iff\qquad (x+1,y)\in\Dinv(D_m(k,\ell)). \]
Similarly, it is immediate that $\area(D_{m-2}(k-1,\ell-1))=\area(D_m(k,\ell))$.

Therefore, we need to consider cells $(1,2)$ and $(1,3)$ in $D_m(k,\ell)$ to compute $\dinv(D_{m-2}(k-1,\ell-1))$.  Since $(1,2)$ is in the second row, $(1,2)\in\Dinv(D_m(k,\ell))$.  The cell $(1,3)$ is in $\Dinv(D_m(k,\ell))$ if and only if
\begin{align*} \gamma_m[(1,3)^\Rightarrow] < m-3 &\qquad\iff\qquad k>m/3 \\
 &\qquad\iff\qquad D_m(k,\ell) \text{ is type 3b.} \end{align*}
So if $D_m(k,\ell)$ is type 3b,
\[ \dinv(D_{m-2}(k-1,\ell-1)) = \dinv(D_m(k,\ell))-2, \]
otherwise
\[ \dinv(D_{m-2}(k-1,\ell-1)) = \dinv(D_m(k,\ell))-1. \]

Since every $(m-2,3)$-Dyck path is the image of a $(m,3)$ dyck path of type 1, 3a, or 3b by removing one cell from row 2 and one cell from row 3,
\begin{equation} \label{result2} K_{m-2,3} = q^{-1}(P^m_1+P^m_{3a})+q^{-2}P^m_{3b}. \end{equation}

\section{Symmetry Results}

In the previous section we were studying the Hikita polynomials
\[ \mathcal H_{m,3}(X;q,t)= C_{m,3}(q,t)F_\emptyset(X)+p^m_1(q,t)F_{\{1\}}(X)+ p^m_2(q,t)F_{\{2\}}(X)+C_{m-3,3}(q,t)F_{\{1,2\}}(X). \]
By observing equations \eqref{pfdinv1}---\eqref{pfdinv3b22} we note that $p^m_1=p^m_2$, for all $m$.  This means that $H_{m,3}$ is symmetric in $X$ and can be written
\[ \mathcal H_{m,3}(X;q,t)= C_{m,3}(q,t)s_{(1,1,1)}(X)+p^m(q,t)s_{(2,1)}(X)+C_{m-3,3}(q,t)s_{(3)}(X), \]
where $p^m(q,t)=p^m_1(q,t)=p^m_2(q,t)$.

In light of Equations \eqref{result1} and \eqref{result2}, we can express $p^m$ as
\[ p^m(q,t)=K_{m-1,3}(q,t) + K_{m-2,3}(q,t). \]
So
\[ \mathcal H_{m,3}(X;q,t)= C_{m,3}(q,t)s_{(1,1,1)}(X)+(K_{m-1,3}(q,t) + K_{m-2,3}(q,t))s_{(2,1)}(X)+C_{m-3,3}(q,t)s_{(3)}(X). \]

Each of these (modified) rational Catalan polynomials is symmetric in $q$ and $t$, \cite{gorsky14,Kalis15}.  That is,
\[ C_{a,3}(q,t)=C_{a,3}(t,q) \]
and
\[ K_{b,3}(q,t)=K_{b,3}(t,q) \]
for any $b$ and any $a$ not divisible by 3.
Therefore the Hikita polynomials $\mathcal H_{m,3}$ are symmetric in $q$ and $t$:
\[ \mathcal H_{m,3}(X;q,t)=\mathcal H_{m,3}(X;t,q). \]

\section{Acknowledgements}
The authors would like to thank Adriano Garsia and Jim Haglund for the motivation to work
on this problem.  We would also like to thank Angela Hicks, Huilan Li, and Emily Sergel for many valuable discussions on 
Dyck paths and Hikita polynomials.

\bibliographystyle{alpha}
\bibliography{catalanbib}

\end{document}